\documentclass[reqno,11pt]{amsart}
\usepackage{amsfonts,bm}
\usepackage{cite}
\usepackage{url}
\usepackage{hyperref,comment}

\newtheorem{theorem}{Theorem}[section]
\newtheorem{lemma}[theorem]{Lemma}
\newtheorem{proposition}[theorem]{Proposition}
\newtheorem{corollary}[theorem]{Corollary}

\newcommand{\re}{\mathrm{Re}}
\newcommand{\norm}[1]{\left\lVert #1 \right\rVert}  
 \newcommand{\inner}[1]{\left\langle #1 \right\rangle} 
\newcommand{\tr}{\mathrm{tr}}

\author{Yang Huang}
\address{Yang Huang, College of Mathematics and Econometrics, Hunan University, 
Changsha, Hunan, 410082, P.R. China}
\email{\url{FairyHuang@csu.edu.cn} }

\author{Yongtao  Li}
\address{ Yongtao Li, College of Mathematics and Econometrics, Hunan University, 
Changsha, Hunan, 410082, P.R. China}
\email{\url{ytli0921@hnu.edu.cn} }

\author{Lihua Feng}
\address{Lihua Feng, School of Mathematics and Statistics, Central South University, 
Changsha, Hunan, 410083, P.R. China}
\email{\url{fenglh@163.com} }

\author{Weijun Liu$^*$}
\address{Weijun Liu, School of Mathematics and Statistics, Central South University, 
Changsha, Hunan, 410083, P.R. China}
\email{\url{wjliu6210@126.com}}

\keywords{Block matrices; Positive semidefinite; Generalized matrix function; Krein's inequality  }
\subjclass[2010]{47B65, 15B42, 15A45}
\date{March 2, 2022. 
This paper was firstly finished in March, 2020, and was later published 
on Journal of Mathematical Inequalities, 16 (2022), no.1, 143--156. 
See \url{http://jmi.ele-math.com/16-11}. }

\begin{document}

\title[Generalized Matrix Function and inner product]{Inequalities for generalized matrix function 
and inner product}

\begin{abstract}
We present inequalities related to generalized matrix function for positive semidefinite 
block matrices. We introduce partial generalized matrix functions 
corresponding to partial traces, and 
then provide a unified extension of the recent inequalities  due to 
Lin [Electron. J. Linear Algebra 27 (2014) 821--826], Zhang et al. [Linear Algebra Appl. 498 (2016) 99--105] 
and [Electron. J. Linear Algebra 27 (2014) 332--341] 
and a result of Choi [Linear Algebra Appl. 532 (2017) 1--7]. 
Moreover, we demonstrate the application of a positive semidefinite $3\times 3$ block  matrix, 
which motivates us to give  alternative proofs of Dragomir's inequality and 
Krein's inequality. 
\end{abstract}
\maketitle

\section{Introduction}

Let $G$ be a subgraph of the symmetric group $S_n$ on $n$ letters 
and let $\chi$ be an irreducible character of $G$. 
For any $n\times n$ complex matrix $A=(a_{ij})$, 
the generalized matrix function of $A$ 
(also known as immanant) afforded by $G$ and $\chi$ is defined by 
\[ \mathrm{d}_{\chi}^G(A):= \sum\limits_{\sigma \in G} 
\chi (\sigma) \prod\limits_{i=1}^n a_{i\sigma (i)}. \]

Some specific subgroups $G$ and characters $\chi$ lead to some well-known functionals 
on the matrix space. For example, 
if $G=S_n$ and $\chi$ is the signum function with value $\pm 1$, 
then the generalized matrix function becomes the usual matrix determinant. Moreover, 
by setting $\chi (\sigma) \equiv 1$ for each $\sigma \in G=S_n$, we get the permanent of the matrix. In addition,  
setting $G=\{e\}\subset S_n$ defines the product of the main diagonal entries of the matrix 
(also known as the Hadamard matrix function).

Let $A$ and $B$ be $n\times n$ positive semidefinite matrices. 
It is easy to prove by simultaneous diagonalization argument  that 
\begin{equation} \label{eeq0}
\det (A+B) \ge \det (A) +\det (B). 
\end{equation}
There are many extensions and generalizations of (\ref{eeq0}) in the literature; see, e.g., \cite{Lin14,Zhang14,Zhang16,BS15}.  
For instance, 
the following remarkable extension  is known (see \cite[p. 228]{Me97}), it states that 
\begin{equation} \label{eeq1}
\mathrm{d}_{\chi}^G (A+B) \ge \mathrm{d}_{\chi}^G(A) 
+ \mathrm{d}_{\chi}^G(B). 
\end{equation}
 Recently, Paksoy, Turkmen and Zhang \cite{Zhang14} provided a natural generalization 
of (\ref{eeq1}) for triple matrices 
by embedding the vectors of Gram matrices into a 
``sufficiently large'' inner product space and  
using the properties of tensor products. 
More precisely,  they proved for positive semidefinite matrices $A,B$ and $C$ that 
\begin{equation} \label{eeq2}
\mathrm{d}_{\chi}^G(A+B+C) +
\mathrm{d}_{\chi}^G(C) \ge 
\mathrm{d}_{\chi}^G(A+C) +\mathrm{d}_{\chi}^G(B+C). 
\end{equation}
Their approach to establish (\ref{eeq2}) is algebraic as well as combinatorial. 
Soon after, 
Chang, Paksoy and Zhang \cite[Theorem 3]{Zhang16} 
(Berndt and Sra \cite{BS15} independently) 
presented a further improvement of (\ref{eeq2}) 
by considering the tensor products of operators as words on certain alphabets, 
which states that 
\begin{equation} \label{eeq3} \begin{aligned} 
 &\mathrm{d}_{\chi}^G (A+B+C) +
 \mathrm{d}_{\chi}^G (A) + 
 \mathrm{d}_{\chi}^G (B) +\mathrm{d}_{\chi}^G (C) \\
 & \quad \ge \mathrm{d}_{\chi}^G (A+B) + 
 \mathrm{d}_{\chi}^G (A+C)+ \mathrm{d}_{\chi}^G (B+C). 
\end{aligned}  \end{equation}
Indeed, (\ref{eeq3}) is an improvement on (\ref{eeq2})  because 
\begin{align*}
&\mathrm{d}_{\chi}^G (A+B+C)  +\mathrm{d}_{\chi}^G (C) 
 - \bigl( \mathrm{d}_{\chi}^G (A+C) + 
 \mathrm{d}_{\chi}^G (B+C) \bigr) \\
& \quad \ge \mathrm{d}_{\chi}^G (A+B)- 
\mathrm{d}_{\chi}^G (A) -\mathrm{d}_{\chi}^G (B) \ge 0. 
\end{align*}

Before starting our results, 
we  review briefly the basic definition and 
notation of tensor product in Multilinear Algebra Theory \cite{Me97}. 
The space of $m\times n$ complex matrices is denoted by $\mathbb{M}_{m\times n}$. 
If $m=n$, we use $\mathbb{M}_n$ instead of $\mathbb{M}_{n\times n}$ and 
if $n=1$, we use $\mathbb{C}^m$ instead of $\mathbb{M}_{m\times 1}$. 
The  identity matrix of $\mathbb{M}_n$ is denoted by $I_n$, or simply by $I$ 
if no confusion is possible. 
We use $\mathbb{M}_m(\mathbb{M}_n)$ for the set of $m\times m$ block matrices 
with each block in $\mathbb{M}_n$. 
Let $X\otimes Y$ denote the Kronecker product of $X,Y$, that is, 
if $X=[x_{ij}]\in \mathbb{M}_m$ and $Y\in \mathbb{M}_n$, then 
$X\otimes Y\in \mathbb{M}_m(\mathbb{M}_n)$ whose $(i,j)$ block is $x_{ij}Y$. 
By convention, if $X\in \mathbb{M}_n$ is positive semidefinite, we write $X\ge 0$. 
For two Hermitian matrices $A$ and $B$ of the same size, $A\ge B$ means $A-B\ge 0$.  

Let $V$ be an $n$-dimensional Hilbert space 
and $\otimes^n V$ be the tensor product space of $n$ copies of $V$. 
Let $G$ be a subgroup of the symmetric group $S_n$ and 
$\chi$ be an irreducible character of $G$. 
The {\it symmetrizer} induced by $\chi$ on the tensor product space $\otimes^n V$ 
is defined by its action 
\begin{equation} \label{eeq5}
 S(v_1\otimes \cdots \otimes v_n):=\frac{1}{|G|} \sum\limits_{\sigma \in G}
\chi (\sigma) v_{\sigma^{-1}(1)}\otimes \cdots \otimes v_{\sigma^{-1}(n)}.  
\end{equation}
All elements of the form (\ref{eeq5}) span a vector space, denoted by 
$V_{\chi}^n(G)\subset \otimes^n V$, which is the space of the symmetry class of tensors 
associated with $G$ and $\chi$ (see \cite[p. 154, 235]{Me97}). 
It is easy to verified that $V^n_{\chi}(G)$ is an invariant subspace of $\otimes^n V$. 
For a linear operator $A$ on $V$, the induced operator $K(A)$ of $A$ with respect to $G$ and $\chi$ 
is defined to be $K(A)=(\otimes^n A)\big|_{V^n_{\chi}(G)}$, 
the restriction of $\otimes^n A$ on $V_{\chi}^n(G)$. 

The induced operator $K(A)$ is closely related to generalized matrix function. 
Let $e_1,e_2,\ldots ,e_n$ be an orthonormal basis of $V$ and 
$P$  be a matrix representation of  
the linear operator $A$ on $V$ with respect to  the basis $e_1,\ldots ,e_n$. Then 
\begin{equation} \label{eeq6}
 \mathrm{d}_{\chi}^G \left( P^T\right)=\frac{|G|}{\mathrm{deg}(\chi)} \langle K(A)e^*, e^*\rangle, 
\end{equation}
where $\mathrm{deg}(\chi)$ is the degree of $\chi$ and $e^*:=e_1*e_2* \cdots *e_n$ 
is the decomposable symmetrized tensor of $e_1,\ldots ,e_n$ (see \cite[p. 227, 155]{Me97}). 

In this paper, we first extend the above-cited results (\ref{eeq1}), 
(\ref{eeq2}) and (\ref{eeq3}) to positive semidefinte block matrices in Section \ref{sec2}, 
and then we obtain some inequalities related to determinant and permanent as  byproducts. 
In Section \ref{sec3}, we investigate the applications 
of a positive semidefinite $3\times 3$ block matrix and 
provide an alternative proof of Dragomir's 
inequality. In Section \ref{sec4}, 
we present a simple proof of Krein's inequality and other triangle inequalities.

\section{Partial Generalized Matrix Functions}
\label{sec2}

Now we introduce the definition of partial traces.  
Partial trace becomes a popular topic recently and it
has various applications in
 Quantum Information Theory \cite[p. 12]{Petz}. 
Given $A\in \mathbb{M}_m(\mathbb{M}_n)$, 
the first partial trace (map) $A \mapsto \mathrm{tr}_1 A \in \mathbb{M}_n$ is defined as the  
adjoint map of the imbedding map $X \mapsto I_m\otimes X\in \mathbb{M}_m\otimes \mathbb{M}_n$. 
Correspondingly, the second partial trace (map)  $A \mapsto \mathrm{tr}_2 A\in \mathbb{M}_m$ is 
defined as the adjoint map of the imbedding map 
$Y\mapsto Y\otimes I_n \in \mathbb{M}_m\otimes \mathbb{M}_n$. Therefore, we have
\[ \langle I_m\otimes X, A \rangle =\langle X, \mathrm{tr}_1A \rangle ,
\quad \forall X\in \mathbb{M}_n, \]
and 
\[ \langle Y\otimes I_n, A \rangle =\langle Y,\mathrm{tr}_2 A \rangle, 
\quad \forall Y\in \mathbb{M}_m. \]
Assume that $A=[A_{ij}]_{i,j=1}^m$ with $A_{ij}\in \mathbb{M}_n$, 
then the visualized forms of the partial traces 
are actually given in  \cite[Proposition 4.3.10]{Bh07} as
\[ \mathrm{tr}_1 { A}=\sum\limits_{i=1}^m A_{ii},\quad 
\mathrm{tr}_2{ A}=[\mathrm{tr}A_{ij}]_{i,j=1}^m. \] 
Motivated by the definition of partial traces, 
we now define the {\it partial generalized matrix function} by 
\begin{equation*}
 \left( \mathrm{d}_{\chi}^G\right)_{\! 2} \! A = \left[ 
 \mathrm{d}_{\chi}^G (A_{ij}) \right]_{i,j=1}^m \in \mathbb{M}_m. 
\end{equation*}
Suppose that $A_{ij}=\bigl[a_{rs}^{ij} \bigr]_{r,s=1}^n$, 
 we next define $B_{rs}:=\bigl[ a_{rs}^{ij}\bigr]_{i,j=1}^m$ and 
\[ \left( \mathrm{d}_{\chi}^G\right)_{\! 1} \! A 
= \left[ \mathrm{d}_{\chi}^G (B_{rs}) \right]_{r,s=1}^n 
\in \mathbb{M}_n. \]
 Furthermore, 
we denote by $\widetilde{A}=[B_{rs}]_{r,s=1}^n$,  
then we can easily see that  
$( \mathrm{d}_{\chi}^G)_1 A =( \mathrm{d}_{\chi}^G)_2 
\widetilde{A}$ and $\widetilde{\widetilde{A}}=A$.   
Additionally,  $\widetilde{A}$ and $A$ are unitarily similar; 
see, e.g., \cite[Theorem 7]{Choi17}. 
We remark here that the motivation of the 
definition of partial  generalized matrix 
function also grew out of that of 
the partial determinants \cite{Choi17}.

Let $A=[A_{ij}]_{i,j=1}^m\in \mathbb{M}_m(\mathbb{M}_n)$ 
be a positive semidefinite block matrix. 
It is well-known that $\tr_1 A 
\in \mathbb{M}_m$ is  positive semidefinite. 
Moreover, 
both $\mathrm{tr}_2A=[\mathrm{tr} A_{ij}]_{i,j=1}^m$ and 
$\det_2 A=[\det A_{ij}]_{i,j=1}^m$ 
 are also positive semidefinite;  
see \cite[p. 221, 237]{Zhang11}. 
Whereafter, Zhang \cite[Theorem 3.1]{Zhang12} extends the positivity to generalized matrix function via the 
generalized Cauchy-Binet formula, 
i.e., the matrix 
$\left[\mathrm{d}_{\chi}^G (A_{ij}) \right]_{i,j=1}^m$ is positive semidefinite. 

It is interesting that Lin and Sra \cite{LS16} proved 
for $A,B\ge 0$, 
\begin{equation}\label{eqls16}
 {\det}_2 (A+B)\ge {\det}_2 A +{\det}_2 B. 
 \end{equation}
 In \cite[Corollary 9]{Choi17}, an analogous result 
 corresponding to the first partial determinant 
  is also proved, it states that 
  \begin{equation}\label{eqchoi17}
 {\det}_1 (A+B)\ge {\det}_1 A +{\det}_1 B. 
 \end{equation}

The following lemma plays an important role in the proof of 
our extension (Theorem \ref{thm23}),  
and it also can be found as a special case 
 in \cite{Zhang16} and \cite{BS15}. 
We here provide a proof for the sake of completeness.

\begin{lemma} \label{lem32}
Let $A,B,C$ be positive semidefinite matrices of same size. 
Then for every positive integer $r$, we have
\begin{equation}  \label{eq13}
\begin{aligned}
& \otimes^r (A+B+C) +\otimes^r A +\otimes^r B +\otimes^r C \\ 
&\quad \ge \otimes^r (A+B) +\otimes^r (A+C) +\otimes^r (B+C).
\end{aligned} \end{equation}
\end{lemma} 

\begin{proof}
The proof is by induction on $r$. 
The base case $r=1$  holds with equality, 
and the case $r=2$ is easy to verify. 
Assume therefore (\ref{eq13}) holds for some $r=m\ge 2$, that is 
\begin{equation*}
\begin{aligned}
& \otimes^m (A+B+C) +\otimes^m A +\otimes^m B +\otimes^m C \\ 
&\quad \ge \otimes^m (A+B) +\otimes^m (A+C) +\otimes^m (B+C).
\end{aligned} \end{equation*}
For $r=m+1$, we have  
\begin{align*}
&\otimes^{m+1}(A+B+C)\\
&= \bigl( \otimes^m(A+B+C) \bigr) \otimes (A+B+C)\\
& \ge \bigl( \otimes^m (A+B) +\otimes^m (A+C) +\otimes^m (B+C) -
\otimes^m A - \otimes^m B - \otimes^m C \bigr) \\
  &\quad    \otimes (A+B+C) \\
& = \otimes^{m+1}(A+B) +\otimes^{m+1}(A+C) +\otimes^{m+1}(B+C) \\
& \quad     -\otimes^{m+1}A-\otimes^{m+1}B-\otimes^{m+1}C \\
& \quad + \bigl( \otimes^m(A+B) \bigr) \otimes C + 
\bigl(\otimes^m(A+C)\bigr) \otimes B  + \bigl(\otimes^m(B+C)\bigr) \otimes A \\
& \quad - \bigl(\otimes^m A \bigr) \otimes (B+C) -\bigl(\otimes^m B \bigr) \otimes (A+C) 
- \bigl(\otimes^m C \bigr) \otimes (A+B).
\end{align*}
It remains to prove the following result   
\begin{align*}
&\bigl( \otimes^m(A+B) \bigr) \otimes C + 
\bigl(\otimes^m(A+C)\bigr) \otimes B  + \bigl(\otimes^m(B+C)\bigr) \otimes A \\
&\quad \ge  \bigl(\otimes^m A \bigr) \otimes (B+C)  + \bigl(\otimes^m B \bigr) \otimes (A+C) 
  + \bigl(\otimes^m C \bigr) \otimes (A+B).
\end{align*}
This follows immediately by the superadditivity of 
tensor power: 
\begin{align*} 
\otimes^m (A+B)&\ge \otimes^m A +\otimes^m B, \\
\otimes^m(A+C) &\ge \otimes^m A +\otimes^m C, \\
\otimes^m(B+C) &\ge \otimes^m B +\otimes^m C. 
\end{align*}
Thus, the desired inequality  holds.
\end{proof}

To show our main result in this section, we require one more lemma. 

\begin{lemma}  \label{lem10} 
(\cite[p. 93]{Bh07})
Let $A=[A_{ij}]_{i,j=1}^m \in \mathbb{M}_m(\mathbb{M}_n)$. 
Then $[\otimes^r A_{ij}]_{i,j=1}^m $ is a principal submatrix of $\otimes^r A$ 
for every positive integer $r$.
\end{lemma}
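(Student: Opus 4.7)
My plan is to verify the claim by a direct indexing argument on the natural tensor basis, which is how the statement is proved in Bhatia's book. First I would view $A\in \mathbb{M}_m(\mathbb{M}_n)$ as acting on $\mathbb{C}^m\otimes \mathbb{C}^n$ with the lexicographically ordered basis $\{e_i\otimes f_k : 1\le i\le m,\ 1\le k\le n\}$, so that the block entry $(A_{ij})_{kl}$ coincides with the matrix element indexed by row $(i,k)$ and column $(j,l)$. Then $\otimes^r A$ acts on $(\mathbb{C}^m\otimes \mathbb{C}^n)^{\otimes r}$, whose canonical basis is indexed by $r$-tuples $((i_1,k_1),\ldots,(i_r,k_r))$, and its entries are given by
\[
(\otimes^r A)_{((i_s,k_s))_s,\,((j_s,l_s))_s} \;=\; \prod_{s=1}^{r} (A_{i_sj_s})_{k_sl_s}.
\]

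Next I would single out the index set
\[
\mathcal{S} \;=\; \bigl\{\,((i,k_1),(i,k_2),\ldots,(i,k_r)) : 1\le i\le m,\ 1\le k_s\le n\,\bigr\},
\]
consisting of those basis vectors whose first coordinate is repeated across all $r$ tensor factors. Since $|\mathcal{S}|=mn^{r}$ and the \emph{same} set is used for rows and columns, the restriction of $\otimes^r A$ to $\mathcal{S}$ is by definition a principal submatrix, of the correct size $mn^{r}\times mn^{r}$.

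Finally I would compute this principal submatrix entry by entry. For a row labelled $((i,k_1),\ldots,(i,k_r))\in\mathcal{S}$ and a column labelled $((j,l_1),\ldots,(j,l_r))\in\mathcal{S}$, the product above collapses to
\[
\prod_{s=1}^{r}(A_{ij})_{k_sl_s} \;=\; (\otimes^r A_{ij})_{(k_1,\ldots,k_r),\,(l_1,\ldots,l_r)} .
\]
Grouping the elements of $\mathcal{S}$ by their common first coordinate $i$, with the multi-index $(k_1,\ldots,k_r)$ ranging over positions inside an $n^r\times n^r$ block, we recognize the restricted matrix as precisely the block matrix $[\otimes^r A_{ij}]_{i,j=1}^{m}$. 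The only step that requires mild care — and the one I would flag as the main (very modest) obstacle — is ensuring that the chosen ordering of $\mathcal{S}$ matches the block ordering in the claimed form; but this amounts to a single simultaneous row-column permutation, which preserves the property of being a principal submatrix, so the identification is complete.
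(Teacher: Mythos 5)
Your argument is correct: the paper itself gives no proof of this lemma (it is quoted from Bhatia's book), and your direct indexing argument --- selecting the basis vectors of $(\mathbb{C}^m\otimes\mathbb{C}^n)^{\otimes r}$ whose ``block'' coordinate is constant across the $r$ factors and checking the entries collapse to $(\otimes^r A_{ij})_{(k_1,\ldots,k_r),(l_1,\ldots,l_r)}$ --- is precisely the standard proof intended by that reference. Your closing remark about the ordering of the index set is the right (and only) point of care, and it is handled correctly since a simultaneous row--column permutation does not affect the principal-submatrix property.
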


We now present a unified extension  of 
(\ref{eeq3}), (\ref{eqls16}) and (\ref{eqchoi17}). 

\begin{theorem} \label{thm23}
Let ${A},B,C\in \mathbb{M}_m(\mathbb{M}_n)$ be positive semidefinite. Then
\begin{equation} \label{eq17} \begin{aligned} 
 &\left( \mathrm{d}_{\chi}^G\right)_{1}(A+B+C) +
\left( \mathrm{d}_{\chi}^G\right)_1 A + 
\left( \mathrm{d}_{\chi}^G\right)_1B + 
\left( \mathrm{d}_{\chi}^G\right)_1 C \\
 & \quad \ge 
  \left( \mathrm{d}_{\chi}^G\right)_1 (A+B) +
   \left( \mathrm{d}_{\chi}^G\right)_1 (A+C)
+\left( \mathrm{d}_{\chi}^G\right)_1(B+C), 
\end{aligned}  \end{equation}
and 
\begin{equation} \label{eq18} \begin{aligned} 
 &\left( \mathrm{d}_{\chi}^G\right)_2(A+B+C) +
\left( \mathrm{d}_{\chi}^G\right)_2 A + 
\left( \mathrm{d}_{\chi}^G\right)_2 B + 
\left( \mathrm{d}_{\chi}^G\right)_2 C \\
 & \quad \ge 
 \left( \mathrm{d}_{\chi}^G\right)_2 (A+B) +
  \left( \mathrm{d}_{\chi}^G\right)_2 (A+C)
+\left( \mathrm{d}_{\chi}^G\right)_2 (B+C). 
\end{aligned}  \end{equation}
\end{theorem}

\begin{proof}
We only prove (\ref{eq18}) 
since (\ref{eq17}) can be proved by exchanging the role of $\widetilde{A}$ and $A$. 
By Lemma \ref{lem32}, we have 
\begin{equation*} 
\begin{aligned}
& \otimes^r (A+B+C) +\otimes^r A +\otimes^r B +\otimes^r C \\ 
&\quad \ge \otimes^r (A+B) +\otimes^r (A+C) +\otimes^r (B+C).
\end{aligned} \end{equation*}
By Lemma \ref{lem10}, it follows that  
\begin{align*} 
 & [\otimes^r (A_{ij}+B_{ij}+C_{ij})]_{i,j=1}^m 
+ [\otimes^r A_{ij}]_{i,j=1}^m + [\otimes^r B_{ij}]_{i,j=1}^m +[\otimes^r C_{ij}]_{i,j=1}^m \\
 &\quad \ge [\otimes^r (A_{ij}+B_{ij})]_{i,j=1}^m + 
[\otimes^r (A_{ij}+C_{ij})]_{i,j=1}^m + [\otimes^r (B_{ij}+C_{ij})]_{i,j=1}^m. 
\end{align*}
By restricting above inequality to the symmetry class $V_{\chi}^G(V)$, we get 
\begin{align*}
& [ K(A_{ij}+B_{ij}+C_{ij})]_{i,j=1}^m 
+ [K (A_{ij})]_{i,j=1}^m + [K (B_{ij})]_{i,j=1}^m +[K (C_{ij})]_{i,j=1}^m \\
&\quad \ge [K (A_{ij}+B_{ij})]_{i,j=1}^m +
[K (A_{ij}+C_{ij})]_{i,j=1}^m + [K (B_{ij}+C_{ij})]_{i,j=1}^m. 
\end{align*}
The required result (\ref{eq18}) follows by combining (\ref{eeq6}). 
\end{proof}

\begin{corollary} \label{coro24}
Let $A,B,C\in \mathbb{M}_m(\mathbb{M}_n)$ be positive semidefinite. Then
\begin{equation*}  \begin{aligned} 
 &\mathrm{det}_1(A+B+C) +\mathrm{det}_1A +\mathrm{det}_1B +\mathrm{det}_1 C \\
 & \quad \ge \mathrm{det}_1(A+B) + \mathrm{det}_1 (A+C)+\mathrm{det}_1(B+C), 
\end{aligned}  \end{equation*}
and 
\begin{equation*}  \begin{aligned} 
& \mathrm{det}_2(A+B+C) +\mathrm{det}_2A +\mathrm{det}_2B +\mathrm{det}_2 C \\
& \quad  \ge \mathrm{det}_2(A+B) + \mathrm{det}_2 (A+C)+\mathrm{det}_2(B+C).
\end{aligned}  \end{equation*}
\end{corollary}

\begin{proof}
We set $G=S_n$, the symmetric group and  
specify $\chi =\mathrm{sign}$, the signum function, then 
 Theorem \ref{thm23} yields the desired inequalities. 
\end{proof}

\noindent
{\bf Remark.}~~
Corollary \ref{coro24}
is a unified extension of a large number of 
 determinantal inequalities. 
In particular, by setting $m=1$, we can get 
\begin{align*} &\det (A+B+C) +\det A+ \det B+\det C \\
&\quad \ge \det (A+B) +\det (A+C) +\det (B+C).  
\end{align*}
Therefore, we can obtain  
\[ \det (A+B+C) +\det C \ge \det (A+C) +\det (B+C). \]
These two inequalities are well-known, 
the first one is the main result in \cite{Lin14} and 
the second  can be found in \cite[p. 215, Problem 36]{Zhang11}. 
On the other hand, 
Corollary \ref{coro24} also implies the following result  
\begin{align*}
 {\det}_2 (A+B+C) +{\det}_2(C)\ge {\det}_2 (A+C) +{\det}_2 (B+C),
\end{align*}
and 
\[ {\det}_2 (A+B)\ge {\det}_2 A +{\det}_2 B, \]
which are generalizations of a recent result in \cite{LS16}.

Similarly, we can get some analogous inequalities for permanent.

\begin{corollary} \label{coro25}
Let $A,B,C\in \mathbb{M}_m(\mathbb{M}_n)$ be positive semidefinite. Then
\begin{equation*}  \begin{aligned} 
 &\mathrm{per}_1(A+B+C) +\mathrm{per}_1A +\mathrm{per}_1B +\mathrm{per}_1 C \\
 & \quad \ge \mathrm{per}_1(A+B) + \mathrm{per}_1 (A+C)+\mathrm{per}_1(B+C), 
\end{aligned}  \end{equation*}
and 
\begin{equation*}  \begin{aligned} 
& \mathrm{per}_2(A+B+C) +\mathrm{per}_2A +\mathrm{det}_2B +\mathrm{per}_2 C \\
& \quad  \ge \mathrm{per}_2(A+B) + \mathrm{per}_2 (A+C)+\mathrm{per}_2(B+C).
\end{aligned}  \end{equation*}
\end{corollary}

\begin{proof}
We set $G=S_n$, the symmetric group and  
specify $\chi (g)\equiv 1$ for all $g\in S_n$, then 
applying Theorem \ref{thm23} leads to the required inequalities. 
\end{proof}

\section{Positivity and Dragomir's inequality}
\label{sec3}

Recently, positive semidefinite $3\times 3$ block matrices are extensively studied, 
such a partition leads to versatile and 
elegant theoretical inequalities; 
see, e.g., \cite{Lin14b,GR15,Drury14} for related results. 
In particular, assume that $X,Y,Z$ are matrices with appropriate size, then the block matrix
\begin{equation} \label{eeq9} \begin{bmatrix}
X^*X & X^*Y & X^*Z \\
Y^*X & Y^*Y & Y^*Z \\
Z^*X & Z^*Y & Z^*Z
\end{bmatrix} 
\end{equation}
is positive semidefinite. As stated in Section \ref{sec2}, it follows that  
the resulting matrices by taking determinant and trace entrywise  
are positive semidefinite. 
Different size of matrices in (\ref{eeq9}) 
will yield a large number of interesting triangle inequalities; see \cite[Theorem 2.1]{Lin14b}.  
Particularly, 
let $X,Y,Z$ be column vectors in $\mathbb{C}^n$, 
say $u,v,w$. We write $\inner{\cdot , \cdot }$ 
 for the standard inner product on $\mathbb{C}^n$, 
 then it follows  that 
\begin{equation}\label{eeq12} 
\begin{bmatrix}
\re \inner{u,u} & \re \inner{u,v} & \re \inner{u,w} \\[0.1cm]
\re \inner{v,u}& \re \inner{v,v} & \re \inner{v,w} \\[0.1cm]
\re \inner{w,u}  & \re \inner{w,v} & \re \inner{w,w}
\end{bmatrix} 
\end{equation}
is a positive semidefinite matrix.  
This matrix was widely studied in the literature;  
see \cite{Lin12,Zhang16a} for more details.

In this section, 
we first present two analogous result of (\ref{eeq12}) 
in Corollary \ref{coro32} and Proposition \ref{prop33}. 
By applying Corollary \ref{coro32},  we then 
give a short proof of Dragomir's inequality (Theorem \ref{thm32}). 
Next, we require the following lemma, 
which is an exercise in \cite[p. 26]{Bh07}. 
We here provide a detailed  proof 
  for the convenience of readers.

\begin{lemma} \label{lem1}
Let $A=[a_{ij}]$ be a $3\times 3$ complex matrix and 
let $|A|=[|a_{ij}|]$ 
be the matrix obtained from $A$ by taking the absolute values of the entries of $A$. 
If $A$ is positive semidefinite, then $|A|$ is positive semidefinite. 
\end{lemma}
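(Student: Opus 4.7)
The plan is to verify positive semidefiniteness of $|A|$ by checking that all of its principal minors are nonnegative, then invoking the standard criterion that a Hermitian matrix is positive semidefinite iff all its principal minors (not just the leading ones) are nonnegative.

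First I would observe that $|A|$ is real symmetric: the diagonal entries $a_{ii}$ of the Hermitian matrix $A$ are already nonnegative reals, so $|a_{ii}|=a_{ii}$; and $a_{ji}=\overline{a_{ij}}$ forces $|a_{ji}|=|a_{ij}|$. Hence $|A|$ is a Hermitian (in fact real symmetric) $3\times 3$ matrix with nonnegative diagonal entries, which handles the $1\times 1$ principal minors. For the $2\times 2$ principal minors, write $a_{ii}|a_{jj}|-|a_{ij}|^2 = a_{ii}a_{jj}-|a_{ij}|^2$; this is exactly the corresponding $2\times 2$ principal minor of $A$, which is nonnegative since $A\ge 0$.

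The only step that really uses the $3\times 3$ structure is the determinantal inequality. Expanding along the Laplace formula gives
\begin{equation*}
\det A = a_{11}a_{22}a_{33} - a_{11}|a_{23}|^2 - a_{22}|a_{13}|^2 - a_{33}|a_{12}|^2 + 2\,\re(a_{12}a_{23}a_{31}),
\end{equation*}
and replacing every entry by its modulus yields
\begin{equation*}
\det |A| = a_{11}a_{22}a_{33} - a_{11}|a_{23}|^2 - a_{22}|a_{13}|^2 - a_{33}|a_{12}|^2 + 2|a_{12}||a_{23}||a_{31}|.
\end{equation*}
Since $\re(z)\le |z|$ for every complex number $z$, and since $|a_{12}a_{23}a_{31}|=|a_{12}||a_{23}||a_{31}|$, subtracting these two identities gives $\det |A|\ge \det A\ge 0$, where the last inequality is the nonnegativity of the full principal minor of $A$.

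I expect the only conceptual hurdle is noticing that the phenomenon is genuinely three-dimensional: for $n\ge 4$ the analogous claim fails, because then the determinant expansion includes several triangle-products whose real parts can partially cancel the moduli, and entrywise absolute values need no longer preserve positivity. The $n=3$ case works because there is a single such cyclic term $a_{12}a_{23}a_{31}$ whose real part is bounded above by its modulus, which is precisely what converts $\det A\ge 0$ into $\det |A|\ge 0$. With all three orders of principal minors verified as nonnegative, the standard characterization of Hermitian positive semidefiniteness (see e.g.\ \cite[Theorem 7.2.5]{HJ13}) completes the proof.
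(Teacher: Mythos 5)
Your proof is correct and follows essentially the same route as the paper's: both arguments reduce to showing that every principal minor of $|A|$ is nonnegative, with the only nontrivial step being $\det|A|\ge\det A$, obtained by bounding the real part of the single cyclic term $a_{12}a_{23}a_{31}$ by its modulus. Your direct expansion of the $3\times 3$ determinant is a clean shortcut past the paper's normalization to unit diagonal and conjugation by a diagonal unitary phase matrix, and it also avoids the separate treatment of zero diagonal entries.
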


\begin{proof}
We first note that the positivity of $A$ implies  all diagonal entries of $A$ are nonnegative. 
If a diagonal entry of $A$ is zero, as $A$ is positive semidefinite, 
then the entire row entries and column entries of $A$ 
are zero and   
it is obvious that the positivity of $\begin{bmatrix}\begin{smallmatrix}a & c \\ 
\overline{c}&b \end{smallmatrix}\end{bmatrix}$ implies the positivity of 
$\begin{bmatrix} \!\begin{smallmatrix}|a| & |c| \\ 
|\overline{c}| & |b| \end{smallmatrix}\!\end{bmatrix}$. 
Without loss of generality, we may assume that $a_{ii}>0$ for $i=1,2,3$. 
Let $D=\mathrm{diag}\bigl\{ a_{11}^{-1/2},a_{22}^{-1/2},a_{33}^{-1/2}\bigr\}$ 
and observe that $D^*|A|D=|D^*AD|$. By scaling, we further assume that 
\[ A=\begin{bmatrix} 1 &a &b \\ \overline{a} &1 &c \\ \overline{b} &\overline{c} &1
\end{bmatrix}. \]
Recall that $X\ge 0$ means $X$ is positive semidefinite. 
Our goal is to prove 
\begin{equation} \label{eeq14}
 \begin{bmatrix} 1 &a &b \\ \overline{a} &1 &c \\ \overline{b} &\overline{c} &1
\end{bmatrix}
\ge 0 
\Rightarrow 
\begin{bmatrix} 1 &|a| &|b| \\ |\overline{a}| &1 &|c| \\ |\overline{b}| &|\overline{c}| &1
\end{bmatrix}\ge 0 . 
\end{equation}
Assume that $a=|a|e^{i\alpha}$ and $b=|b|e^{i\beta}$, 
and denote $Q=\mathrm{diag}\left\{ 1,e^{-i\alpha },e^{-i\beta }\right\}$. 
By a direct computation,  we obtain 
\[ Q^*AQ =\begin{bmatrix}
1 & |a| & |b| \\ |a| & 1 & ce^{i(\alpha -\beta)} \\ 
|b| &\overline{c}e^{i(\beta -\alpha)} &1 
\end{bmatrix}. \]
Since $A\ge 0$, taking the determinant of $Q^*AQ$ gives 
\[ 1+|a||b|\left( ce^{i(\alpha -\beta)} +\overline{c}e^{i(\beta -\alpha)}\right) 
\ge |a|^2 +|b|^2+|c|^2. \]
Note that $ 2|c| \ge 2\,\mathrm{Re} \left( ce^{i(\alpha -\beta)} \right) \ge 
\left( ce^{i(\alpha -\beta)} +\overline{c}e^{i(\beta -\alpha)}\right)$, then 
\[ 1+2|a||b||c| 
\ge |a|^2 +|b|^2+|c|^2, \]
which is actually  $\det |A|\ge 0$. 
Combining $1-|a|^2\ge 0$, that is, 
every principal minor of $|A|$ is nonnegative, then  
\[ \begin{bmatrix} 1 &|a| &|b| \\ 
|\overline{a}| &1 &|c| \\ 
|\overline{b}| &|\overline{c}| &1
 \end{bmatrix}\ge 0 . \]
Thus, the desired statement (\ref{eeq14}) now follows. 
\end{proof}

\noindent 
{\bf Remark.}~~
We remark that the converse of Lemma \ref{lem1} is not true 
and the statement also not hold for $4\times 4$ case. For example, setting 
\[
B=\begin{bmatrix}
1&-1&-1 \\ -1& 1&-1 \\ -1 &-1 &1 
\end{bmatrix},\quad 
C=\begin{bmatrix}
10 & 3 &-2 &1 \\ 3& 10 &0 &9 \\ -2&0&10 &4 \\ 1&9&4&10
\end{bmatrix}. \] 
We can easily check that both $|B|$ and $C$ are positive semidefinite.  
 However, $B$ and $|C|$ are not positive semidefinite  
because $\det B=-4$ and $\det |C|=-364$. 

\begin{corollary} \label{coro32}
If $u,v$ and $w$ are vectors in an inner product space, then 
\begin{equation*} \begin{bmatrix}
\bigl| \inner{u,u}\bigr| & \bigl| \inner{u,v}\bigr| & \bigl|\inner{u,w}\bigr| \\[0.15cm]
\bigl|\inner{v,u}\bigr| & \bigl|\inner{v,v}\bigr| & \bigl|\inner{v,w}\bigr| \\[0.15cm]
\bigl|\inner{w,u}\bigr|  & \bigl|\inner{w,v}\bigr| & \bigl|\inner{w,w}\bigr|
\end{bmatrix} 
\end{equation*}
is a positive semidefinite matrix. 
\end{corollary}

\begin{proof}
By  the positivity of Gram matrix and Lemma \ref{lem1}. 
\end{proof}

\begin{proposition} \label{prop33}
If $u,v$ and $w$ are  vectors in a Euclidean space such that $u+w=v$,  
then the following matrix is positive semidefinite. 
\begin{equation*} \begin{bmatrix}
 \inner{u,u} &  \inner{u,v} &  -\inner{u,w} \\[0.1cm]
 \inner{v,u}&  \inner{v,v} &  \inner{v,w} \\[0.1cm]
 -\inner{w,u}  &  \inner{w,v} &  \inner{w,w}
\end{bmatrix} 
\end{equation*}
\end{proposition}

\begin{proof}
We  choose an orthonormal basis of $\mathrm{Span}\{u,v,w\}$, then  
we may assume that $u,v$ and $w$ are vectors in $\mathbb{R}^3$ and form 
a triangle on a plane. We denote the angle of $u,v$ by $\alpha$, angle of $-u,w$ by $\beta$ 
and angle of $-w,-v$ by $\gamma$, respectively. Note that $\alpha +\beta +\gamma =\pi$, 
 we next will prove that 
\[ \cos^2 \alpha + \cos^2 \beta +\cos^2 \gamma +2 \cos \alpha \cos \beta \cos \gamma =1. \]
Invoking the fact 
$ \cos^2 x =\frac{1+\cos (2x)}{2}$ and $
\cos x + \cos y = 2\cos \frac{x+y}{2} \cos \frac{x-y}{2}$, 
and combining with $\cos (\alpha + \beta) =
\cos (\pi - \gamma)=\cos \gamma$, we can obtain 
\begin{align*}
\cos^2 \alpha +\cos^2 \beta +\cos^2 \gamma  
&=1+\frac{1}{2}(\cos 2\alpha +\cos 2\beta) +\cos^2 \gamma \\
& =1+ \cos (\alpha +\beta) 
\cos (\alpha- \beta) +\cos^2 \gamma \\
&=1-\cos \gamma \bigl(  \cos (\alpha -\beta) + 
\cos (\alpha +\beta) \bigr) \\
& =1-\cos \gamma \cdot 2\cos \alpha \cos \beta,
\end{align*}
By computing the principal minor, it follows that 
\[ R:=\begin{bmatrix}
1  & \cos \alpha & \cos \beta \\
\cos \alpha &1 & \cos \gamma \\
\cos \beta & \cos \gamma & 1
\end{bmatrix} \]
is positive semidefinite. Setting $S=\mathrm{diag}\{\norm{u},\norm{v},\norm{w}\}$. 
Thus  $S^TRS$ is positive semidefinite. This completes the proof. 
\end{proof}

In 1985, 
Dragomir \cite{Drag75} established  the following remarkable inequality (\ref{eqeq14}) 
related to inner product 
of three vectors, and  presented some improvements of the celebrated Schwarz inequality 
in complex inner product spaces and provided  numerous application for $n$-tuples of complex numbers;  
see, e.g.,  \cite[p. 38]{Drag07} and \cite{Drag19} for more details. 
We here give an alternative proof 
from the perspective of matrix analysis
by using Corollary \ref{coro32}.

\begin{theorem} \label{thm32} 
Let $u,v$ and $w$ be vectors in an inner product space. Then 
\begin{equation}  \label{eqeq14} \begin{aligned}
& \left( \norm{u}^2 \norm{w}^2 -\bigl| \inner{u,w} \bigr|^2 \right) 
\left( \norm{w}^2 \norm{v}^2 -\bigl| \inner{w,v} \bigr|^2 \right) \\
&\quad \ge
\bigl( \left|\inner{u,w}\inner{w,v}\right| - 
\left|\inner{u,v}\inner{w,w}\right|  \bigr)^2. 
\end{aligned} \end{equation}
\end{theorem}

\begin{proof}
Without loss of generality, 
we may assume by scaling that 
$u,v$ and $w$ are unit vectors. 
The required inequality  can be  written  as 
\[  \left( 1-\bigl| \inner{u,w}\bigr|^2 \right) 
\left( 1-\bigl| \inner{w,v}\bigr|^2 \right) 
\ge 
 \left( \bigl| \inner{u,w}\bigr|  \bigl| \inner{w,v}\bigr| - 
\bigl| \inner{u,v}\bigr| \right)^2, \]
which is equivalent to prove 
\begin{equation}\label{eq3}
1 + 2\bigl| \inner{u,v}\bigr| \bigl|\inner{v,w}\bigr|  \bigl| \inner{w,u}\bigr| 
\ge 
\bigl| \inner{u,v} \bigr|^2+ \bigl| \inner{v,w}\bigr|^2 + \bigl| \inner{w,u}\bigr|^2. 
\end{equation}
By Corollary \ref{coro32}, it follows that 
\begin{equation*} \begin{bmatrix}
1 & \bigl| \inner{u,v}\bigr| & \bigl|\inner{u,w}\bigr| \\[0.15cm]
\bigl|\inner{v,u}\bigr| & 1 & \bigl|\inner{v,w}\bigr| \\[0.15cm]
\bigl|\inner{w,u}\bigr|  & \bigl|\inner{w,v}\bigr| & 1
\end{bmatrix} 
\end{equation*}
is positive semidefinite. Hence, the determinant 
of this matrix is nonnegative, which yields the required inequality 
(\ref{eq3}). 
\end{proof}

\noindent 
{\bf Remark.}~~
When the block matrix $[A_{ij}]_{i,j=1}^m$ is positive semidefinite,
we know in Section \ref{sec2} that the $m\times m$ matrix $[\det A_{ij}]_{i,j=1}^m$ 
is also positive semidefinite. Invoking this fact,   
Zhang \cite[Theorem 5.1]{Zhang12}
 proved the following interesting inequality: 
If $u,v$ and $w$ are  unit vectors in an inner product space, then 
\begin{equation} \label{eqZhang}
1+2\, \mathrm{Re}\left( \inner{u,v} \inner{v,w} \inner{w,u}\right) 
\ge \bigl| \inner{u,v}\bigr|^2 + \bigl| \inner{v,w}\bigr|^2+ \bigl| \inner{w,u}\bigr|^2. 
\end{equation} 
Inequality (\ref{eq3}) is weaker than (\ref{eqZhang}). 
We remark here that 
(\ref{eqZhang}) implies the following improvement 
of   Theorem \ref{thm32},    
\begin{equation*}  \begin{aligned}
& \left( \norm{u}^2 \norm{w}^2 -\bigl| \inner{u,w} \bigr|^2 \right) 
\left( \norm{w}^2 \norm{v}^2 -\bigl| \inner{w,v} \bigr|^2 \right) \\
&\quad \ge
\bigl| \inner{u,w}\inner{w,v} - 
\inner{u,v}\inner{w,w}  \bigr|^2. 
\end{aligned} \end{equation*} 
This inequality is also proved by Dragomir in  \cite{Drag75}. 
We leave the details for the interested reader.  
Theorem \ref{thm32} motivates the author to  consider the triangle inequalities 
in next Section \ref{sec4}.

\section{Some Triangle inequalities} 
\label{sec4}

Let $V$ be an inner product space with the inner product $\inner{\cdot ,\cdot }$ 
over the real number field $\mathbb{R}$ or the complex number field $\mathbb{C}$. 
For any two nonzero vectors $u,v$ in $V$, 
there are two different ways 
to define the angle between the vectors $u$ and $v$ 
in terms of the inner product. Namely,
\[ 
 \Phi (u,v) =\arccos \frac{\mathrm{Re} \inner{u,v}}{\norm{u}\norm{v}}, \] 
 and 
 \[ 
 \Psi (u,v) =\arccos \frac{\bigl| \inner{u,v} \bigr|}{\norm{u}\norm{v}}. 
\]
There are various reasons and advantages that 
the angles are defined in these ways. 
Both two definitions are frequently used in the literature; 
see, e.g., \cite{Lin12,Ota20,Zhang16a} for more recent results.

The angles $\Phi$ and $\Psi$ are closely related, 
but not equal unless $\inner{u,v}$ is a nonnegative number. 
We can  see that $0\le \Phi \le \pi$ and $0\le \Psi \le \pi/2$, and 
$\Phi (u,v) \ge \Psi (u,v)$ for all $u,v\in V$, 
since $\mathrm{Re} \inner{u,v}\le \left| \inner{u,v}\right|$ and 
$f(x)=\arccos x$ is a decreasing function on $ [-1,1]$. 
Moreover, 
it is easy to verify that 
\begin{equation} \label{eeq21}
\Psi (u,v)=\min\limits_{|p|=1} \Phi (pu,v)= \min\limits_{|q|=1} \Phi (u,qv) =
\min\limits_{|p|=|q|=1} \Phi (pu,qv). 
\end{equation}

There exist two well-known triangle inequalities for $\Phi$ and $\Psi$ in the literature, 
we will state it as the following Theorem~\ref{thm41}.

\begin{theorem} \label{thm41}
Let $u,v$ and $w$ be vectors in an inner product space. Then 
\begin{align}
\label{eeq22} \Phi (u,v) &\le \Phi (u,w) +\Phi (w,v),\\
\label{eeq23} \Psi (u,v) &\le \Psi (u,w) +\Psi (w,v).
\end{align}
\end{theorem}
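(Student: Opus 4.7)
The plan is to deduce both triangle inequalities from the $3\times 3$ positive semidefinite Gram-type matrices developed in Section~\ref{sec3}. Since both $\Phi$ and $\Psi$ are invariant under positive (respectively unit modulus) rescaling of their arguments, it may be assumed throughout that $u,v,w$ are unit vectors. To prove \eqref{eeq22}, set $\alpha=\Phi(u,w)$, $\beta=\Phi(w,v)$ and $\gamma=\Phi(u,v)$. The matrix \eqref{eeq12} evaluated at these unit vectors is then the real symmetric $3\times 3$ matrix with $1$'s on the diagonal and the cosines $\cos\alpha,\cos\beta,\cos\gamma$ off the diagonal, so its nonnegative determinant reads
\begin{equation*}
1+2\cos\alpha\cos\beta\cos\gamma \;\ge\; \cos^2\alpha+\cos^2\beta+\cos^2\gamma.
\end{equation*}
A direct rearrangement gives $(1-\cos^2\alpha)(1-\cos^2\beta)\ge(\cos\gamma-\cos\alpha\cos\beta)^2$, equivalently $\sin\alpha\sin\beta\ge|\cos\gamma-\cos\alpha\cos\beta|$; dropping the absolute value on the right yields $\cos\gamma\ge\cos(\alpha+\beta)$, and the triangle inequality $\gamma\le\alpha+\beta$ then follows from the monotonicity of $\cos$ on $[0,\pi]$ (with the degenerate range $\alpha+\beta>\pi$ being automatic since $\gamma\le\pi$).

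For \eqref{eeq23}, two essentially parallel routes are available. The direct route repeats the above argument verbatim with the positivity of the absolute-value Gram matrix from Corollary~\ref{coro42} in place of \eqref{eeq12}; the resulting spherical-type bound is precisely \eqref{eq3} after the substitution $|\inner{u,v}|=\cos\Psi(u,v)$ and its cyclic analogues for unit vectors, and the case analysis is even cleaner since each $\Psi$-angle lies in $[0,\pi/2]$. Alternatively, \eqref{eeq23} reduces to \eqref{eeq22} via the minimax identity \eqref{eeq21}: for any unit scalars $p,q$ one has $\Psi(u,v)\le \Phi(pu,qv)\le \Phi(pu,w)+\Phi(w,qv)$, and minimizing the two summands independently over $p$ and $q$ produces $\Psi(u,w)$ and $\Psi(w,v)$ respectively by \eqref{eeq21}.

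No step is genuinely difficult; the only point demanding minor care is the elementary case analysis needed to pass from $\cos\gamma\ge\cos(\alpha+\beta)$ to $\gamma\le\alpha+\beta$, according to whether $\alpha+\beta$ exceeds $\pi$. The substantive content has already been deposited in Section~\ref{sec3} through the two positive semidefinite $3\times 3$ matrices, and Theorem~\ref{thm41} is essentially their angular consequence.
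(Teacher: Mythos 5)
Your proof is correct, but it is not the proof the paper gives for Theorem~\ref{thm41}; it is essentially the classical route that the paper describes in the discussion \emph{preceding} its proof and then deliberately sets aside. Your argument for \eqref{eeq22} is Rao's: positivity of \eqref{eeq12} for unit vectors, determinant expansion, the rearrangement $(1-\cos^2\alpha)(1-\cos^2\beta)\ge(\cos\gamma-\cos\alpha\cos\beta)^2$, and monotonicity of cosine (the paper explicitly credits this to Rao and says it ``boils down to the positivity of the matrix \eqref{eeq12}''). Your two routes to \eqref{eeq23} likewise mirror what the paper records as known: the derivation from Corollary~\ref{coro42}/Theorem~\ref{thm32} is carried out in the paragraph before the proof, and the reduction via the minimax identity \eqref{eeq21} is attributed to Lin. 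The paper's own proof is different in kind: it restricts to $\mathrm{Span}\{u,v,w\}$, disposes of the real (Euclidean) case by elementary geometry, and then handles the complex case by multiplying $u,v$ by unit scalars so that $\inner{x,z}$ and $\inner{z,y}$ become real, realifying $x=X_1+iX_2$, etc., into vectors $X,Y,Z\in\mathbb{R}^6$, and checking that two of the three angles are preserved while the third can only increase, so the real-case inequality transfers back. What your approach buys is brevity and a self-contained deduction from the positivity results of Section~\ref{sec3}; what the paper's realification argument buys is novelty (it generalizes the real-field method of \cite[p.~31]{Zhang11}) and reusability --- the same trick yields the additional angle inequalities of Proposition~\ref{prop42}, which do not follow directly from a single determinant computation. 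All steps in your write-up check out, including the case split when $\alpha+\beta>\pi$ and the independent minimization over $p$ and $q$ (legitimate because the two summands depend on disjoint variables).
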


The first  inequality (\ref{eeq22}) is attributed to Krein 
who presented briefly the result without proof in \cite{Krein69}, 
and proved first by Rao \cite{Rao76} and then in \cite[p. 56]{GR97}, 
whose proof boils down to the positivity of the matrix (\ref{eeq12}). 
We remark that the case on real field for (\ref{eeq22}) 
can  be seen in \cite[p. 31]{Zhang11}. 

For the second one, Lin \cite{Lin12} illustrated  that 
(\ref{eeq23}) can be deduced from (\ref{eeq22}) because of the relation (\ref{eeq21}). 
 Moreover, 
  the positivity of the matrix in Corollary \ref{coro32}
 also guarantees  the triangle inequality (\ref{eeq23}). 
 It is noteworthy that Theorem \ref{thm32} also 
 implies (\ref{eeq23}).  
Indeed, by (\ref{eqeq14}), we have 
\begin{align*}
& \left( \norm{u}^2 \norm{w}^2 -\bigl| \inner{u,w} \bigr|^2 \right)^{1/2} 
\left( \norm{w}^2 \norm{v}^2 -\bigl| \inner{w,v} \bigr|^2 \right)^{1/2} \\
&\quad \ge
\bigl| \inner{u,w}\inner{w,v}\bigr| -\bigl| \inner{u,v}\inner{w,w}  \bigr|. 
\end{align*}
By dividing with $\norm{u}\norm{v}\norm{w}^2$, we can obtain 
\[ \frac{\left| \inner{u,v}\right|}{\norm{u}\norm{v}} 
\ge \frac{\left| \inner{u,w}\right|}{\norm{u}\norm{w}} 
\frac{\left| \inner{w,v}\right|}{\norm{w}\norm{v}} -
\sqrt{1-\frac{\left| \inner{u,w}\right|}{\norm{u}\norm{w}}}
\sqrt{1-\frac{\left| \inner{w,v}\right|}{\norm{w}\norm{v}}}, \]
which is equivalent to 
\begin{align*} \cos \Psi (u,v) 
&\ge \cos \Psi (u,w) \cos \Psi (w,v) - 
\sin \Psi (u,w) \sin \Psi (w,v) \\
&=\cos (\Psi (u,w) +\Psi (w,v)). 
\end{align*}
Thus, (\ref{eeq23}) follows by the decreasing property of cosine on $[0,\pi]$. 

In this section, 
we present an intuitive proof of 
inequalities (\ref{eeq22}) and (\ref{eeq23}). 
 Our method  can be viewed as a further development 
  of that in \cite[p. 31]{Zhang11}, 
and allows us to provide some other angle inequalities. 

\begin{proof}[Proof of Theorem~\ref{thm41}]
We here only prove (\ref{eeq23}) since (\ref{eeq22}) can be proved in a  similar way. 
Because the desired inequality involves 
only three vectors $u,v$ and $w$, 
we may focus on the subspace spanned by $u,v$ and $w$, 
which has dimension at most $3$. 
We may further choose an orthonormal basis (a unit vector 
in the case of dimension one) of this subspace $\mathrm{Span}\{u,v,w\}$. 
Assume that $u,v $ and $w$ have coordinate vectors $x,y$ and $z$ under this basis, respectively. 
Then the desired inequality holds  if and only if it holds for complex vectors $x,y$ and $z$ with 
the standard inner product 
\[ \inner{x,y}=\overline{y_1}x_1+\overline{y_2}x_2 +\cdots +\overline{y}_nx_n.\] 
That is to say, our main goal is to show the following: 
\begin{equation}\label{eeq24} 
\Psi (x,y)\le \Psi (x,z) + \Psi (z,y),\quad \forall \, x,y,z \in \mathbb{C}^3.
\end{equation}
We next prove the  inequality (\ref{eeq24}) in two steps. 
Suppose first that the inner product space is a Euclidean space 
(i.e., an inner product space over field $\mathbb{R}$). 
Then the problem is reduced to $\mathbb{R},\mathbb{R}^2$ or $\mathbb{R}^3$ 
depending on whether the dimension of $\mathrm{Span}\{u,v,w\}$ is $1,2$ or $3$, respectively. 
In this real case, one can draw a simple graph to get the result. 
If the inner product space is an unitary space 
(i.e., an inner product space over field $\mathbb{C}$),  
we need to apply some technical tricks. 
We observe that 
the desired inequality (\ref{eeq24}) 
is not changed if we replace $x,y$ with 
$\omega x,\delta y$ for any complex numbers $\omega,\delta$ satisfying $|\omega|=|\delta|=1$. 
Therefore, we may assume further that both $\inner{x,z}$ and $\inner{z,y}$ are real numbers. 
Let $x=x_1+ix_2,y=y_1+iy_2$ and $z=z_1+iz_2$ for some vectors $x_i,y_i,z_i\in \mathbb{R}^3(i=1,2)$ 
and denote by 
\[ X=\begin{bmatrix}x_1 \\x_2 \end{bmatrix},\quad 
Y=\begin{bmatrix}y_1 \\y_2 \end{bmatrix},\quad 
Z=\begin{bmatrix}z_1 \\z_2 \end{bmatrix}. \]
Note that $X,Y,Z\in \mathbb{R}^6$, 
then by the previous statement for Euclidean space, we can get 
\begin{equation} \label{eeq25}
 \Psi (X,Y) \le \Psi (X,Z) + \Psi (Z,Y). 
\end{equation}
Since $\inner{x,z}$ and $\inner{z,y}$ are real numbers, we have 
\begin{align*}
\inner{x,z}&=\re \inner{x,z}=z_1^Tx_1 +z_2^Tx_2=\inner{X,Z}, \\
\inner{z,y}&=\re \inner{z,y}=y_1^Tz_1+y_2^Tz_2=\inner{Z,Y},\\
\inner{x,y}&=y_1^Tx_1 +y_2^Tx_2 +i(y_1^Tx_2 -y_2^Tx_1) .
\end{align*}
It is easy to see that $\norm{x}=\norm{X},\norm{y}=\norm{Y}$ and $\norm{z}=\norm{Z}$. Thus, 
\begin{equation}\label{eeq26} 
\Psi (x,z)=\Psi (X,Z),\quad \Psi (z,y)=\Psi (Z,Y). 
\end{equation}
Since $f(t)=\mathrm{arccos} \,(t)$ is a decreasing function on $[-1,1]$, we get 
\begin{equation}\label{eeq27}
 \Psi (x,y) =\arccos \frac{\bigl| \inner{x,y} \bigr|}{\norm{x}\norm{y}}\le 
\frac{\bigl| y_1^Tx_1 +y_2^Tx_2 \bigr|}{\norm{X}\norm{Y}}=\Psi (X,Y). 
\end{equation}
Combining  (\ref{eeq25}), (\ref{eeq26}) and (\ref{eeq27}), 
we can get the desired inequality (\ref{eeq24}).
\end{proof}

Using the same idea and technique of the proof of Theorem \ref{thm41}, 
one could also get the following angle inequalities. 

\begin{proposition} \label{prop42}
Let $u,v$ and $w$ be vectors in an inner product space. Then 
\begin{gather*}
\left| \Phi (u,v) -\Phi (v,w) \right|  \le 
\Phi (u,w) 
\le \Phi (u,v) +\Phi (v,w), \\
0\le \Phi (u,v) + \Phi (v,w) +\Phi (w,u) \le 2\pi.
\end{gather*}
Moreover, the above inequalities hold for $\Psi$. 
\end{proposition}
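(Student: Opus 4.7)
The plan is to derive every assertion as a direct consequence of Theorem~\ref{thm41}, reading $\Theta$ as $\Phi$ (the only $[0,\pi]$-valued angle previously defined in the paper). I will handle the three assertions for $\Phi$ first, and then indicate the minor modifications needed for $\Psi$.

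The right-hand triangle inequality $\Phi(u,w) \le \Phi(u,v) + \Phi(v,w)$ is precisely (\ref{eeq22}). For the reverse inequality, applying (\ref{eeq22}) in the form $\Phi(u,v) \le \Phi(u,w) + \Phi(w,v)$ yields $\Phi(u,v) - \Phi(v,w) \le \Phi(u,w)$, and swapping the roles of $u$ and $v$ gives $\Phi(v,w) - \Phi(u,v) \le \Phi(u,w)$. Combining these two produces $|\Phi(u,v) - \Phi(v,w)| \le \Phi(u,w)$.

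For the sum inequality, the lower bound $0 \le \Phi(u,v) + \Phi(v,w) + \Phi(w,u)$ is immediate. For the upper bound, my plan is to apply (\ref{eeq22}) to the ordered triple $(-u, w, -v)$ in place of $(u, v, w)$, obtaining
\[ \Phi(-u, -v) \le \Phi(-u, w) + \Phi(w, -v). \]
From the defining formula for $\Phi$ together with $\re \inner{-x, y} = -\re \inner{x, y}$ and $\re \inner{-x, -y} = \re \inner{x, y}$, one reads off the two identities $\Phi(-x, y) = \pi - \Phi(x, y)$ and $\Phi(-x, -y) = \Phi(x, y)$. Substituting these into the displayed inequality, and using the symmetry $\Phi(w, u) = \Phi(u, w)$, one obtains $\Phi(u, v) + \Phi(v, w) + \Phi(w, u) \le 2\pi$.

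The $\Psi$ case is easier. The triangle and reverse triangle inequalities follow by the identical two-line argument, starting now from (\ref{eeq23}) rather than (\ref{eeq22}). The sum bound is trivial since $\Psi$ takes values in $[0, \pi/2]$, so the three summands sum to at most $3\pi/2 \le 2\pi$. The only mild subtlety in the whole proof is spotting the parity-flip trick for the $\Phi$-sum bound; every remaining step is either a direct appeal to Theorem~\ref{thm41} or a one-line consequence of the definitions.
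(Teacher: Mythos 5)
Your proof is correct, but it takes a genuinely different route from the paper. The paper offers no written proof at all: it simply asserts that Proposition \ref{prop42} follows ``using the same idea of the proof of Theorem \ref{thm41}'', i.e.\ by re-running the reduction to an orthonormal basis of $\mathrm{Span}\{u,v,w\}$, splitting the complex case into real and imaginary parts in $\mathbb{R}^6$, and settling the resulting real three-dimensional configuration geometrically (``draw a simple graph''). You instead treat Theorem \ref{thm41} as a black box and derive everything formally: the reverse triangle inequality is the standard two-application consequence of (\ref{eeq22}) plus the symmetry $\Phi(x,y)=\Phi(y,x)$; and the bound $\Phi(u,v)+\Phi(v,w)+\Phi(w,u)\le 2\pi$ comes from applying (\ref{eeq22}) to the triple $(-u,w,-v)$ together with the identities $\Phi(-x,y)=\pi-\Phi(x,y)$ and $\Phi(-x,-y)=\Phi(x,y)$, both of which are immediate from the definition and the conjugate symmetry $\re\inner{y,x}=\re\inner{x,y}$. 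This parity-flip trick is the one genuinely nonobvious step, and it is a clean substitute for the geometric argument the paper gestures at; it also dispatches the $\Psi$ case correctly (the sum bound being trivial there since $\Psi$ takes values in $[0,\pi/2]$). What your approach buys is brevity and rigor --- no appeal to a picture in $\mathbb{R}^3$ --- at the cost of being parasitic on Theorem \ref{thm41}; the paper's intended argument is self-contained in spirit but is never actually written down. One cosmetic point worth flagging either way: $\Theta$ is never defined in the paper, and your reading of it as $\Phi$ is the only sensible one given that the proposition separately asserts the $\Psi$ case.
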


To end this paper, we give an application 
of Proposition \ref{prop42}. 
The following elegant inequality is the main result in \cite{Zhang94} 
and also can be found in \cite[p. 195]{Zhang11} and \cite{Lin12}, 
it was  derived as a tool in showing a trace inequality for complex unitary matrices. 
Of course, the line of proof provided here is different. 

\begin{corollary}
Let $u,v$ and $w$ be vectors in an inner product space over $\mathbb{C}$. Then 
\[ \sqrt{1-\frac{\left| \inner{u,v}\right|^2}{\norm{u}^2\norm{v}^2}} 
\le \sqrt{1-\frac{\left| \inner{u,w}\right|^2}{\norm{u}^2\norm{w}^2}}  +
\sqrt{1-\frac{\left| \inner{w,v}\right|^2}{\norm{w}^2\norm{v}^2}} . \]
Moreover, the inequality also holds if we replace $|\cdot |$ with $\re \,(\cdot)$. 
\end{corollary}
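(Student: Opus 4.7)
Writing the radicals in terms of the angles $\Phi$ and $\Psi$ defined at the start of Section~\ref{sec4}, observe that
$\sqrt{1-|\inner{u,v}|^2/(\norm{u}^2\norm{v}^2)} = \sin\Psi(u,v)$ and
$\sqrt{1-(\re\inner{u,v})^2/(\norm{u}^2\norm{v}^2)} = \sin\Phi(u,v)$.
So the corollary is equivalent to the two assertions
\[
\sin\Psi(u,v) \le \sin\Psi(u,w)+\sin\Psi(w,v), \qquad \sin\Phi(u,v)\le \sin\Phi(u,w)+\sin\Phi(w,v).
\]
The plan is to prove these separately, exploiting that $\Psi \in [0,\pi/2]$ while $\Phi \in [0,\pi]$.

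For the first inequality I would combine the triangle inequality (\ref{eeq23}) with a short case analysis on $\psi_1 := \Psi(u,w)$ and $\psi_2 := \Psi(w,v)$. If $\psi_1+\psi_2 \le \pi/2$, monotonicity of $\sin$ on $[0,\pi/2]$ together with the identity $\sin(\psi_1+\psi_2)=\sin\psi_1\cos\psi_2+\cos\psi_1\sin\psi_2 \le \sin\psi_1+\sin\psi_2$ yields the claim. Otherwise $\psi_1+\psi_2>\pi/2$; since $\sin$ is increasing on $[0,\pi/2]$ and $\psi_2 \ge \pi/2-\psi_1$, we get $\sin\psi_2 \ge \cos\psi_1$, so that $\sin\psi_1+\sin\psi_2 \ge \sin\psi_1+\cos\psi_1 \ge 1 \ge \sin\Psi(u,v)$.

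For the moreover part, $\Phi$ may attain any value in $[0,\pi]$ and $\sin$ is no longer monotonic, so the previous argument breaks down. My plan is to use the same reduction as in the proof of Theorem~\ref{thm41}: treat the complex inner product space as a real inner product space equipped with $\re\inner{\cdot,\cdot}$ (so that $\Phi$ becomes the usual Euclidean angle) and restrict to $\mathrm{Span}\{u,v,w\}$, which has real dimension at most $3$. After normalization, the three vectors lie on $S^2 \subset \mathbb{R}^3$ and the spherical law of cosines provides $\cos\gamma=\cos\alpha\cos\beta+\sin\alpha\sin\beta\cos C$ for some $C\in[0,\pi]$, where $\alpha=\Phi(u,w)$, $\beta=\Phi(w,v)$, $\gamma=\Phi(u,v)$. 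A direct manipulation then gives
\[
(\sin\alpha+\sin\beta)^2-\sin^2\gamma=\sin\alpha\sin\beta\bigl[\sin\alpha\sin\beta(1+\cos^2 C)+2(1+\cos\alpha\cos\beta\cos C)\bigr],
\]
and the bracketed expression is nonnegative because $|\cos\alpha\cos\beta\cos C|\le 1$. Taking square roots finishes the proof.

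The hardest step is the moreover part: because $\Phi$ can exceed $\pi/2$, neither monotonicity nor the elementary bound $\sin t+\cos t\ge 1$ used in the $\Psi$-case remains available, and the proof must descend to the spherical-geometric identity above to close the gap.
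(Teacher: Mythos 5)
Your proof is correct, but it takes a genuinely different route from the paper's. The paper gives a single unified argument covering $\Phi$ and $\Psi$ at once: writing $\alpha,\beta,\gamma$ for the three angles, it invokes Proposition \ref{prop42} to get $\frac{\alpha}{2}\le\frac{\beta+\gamma}{2}\le\pi-\frac{\alpha}{2}$ and $0\le\frac{|\beta-\gamma|}{2}\le\frac{\alpha}{2}\le\frac{\pi}{2}$, hence $\sin\frac{\alpha}{2}\le\sin\frac{\beta+\gamma}{2}$ and $\cos\frac{\alpha}{2}\le\cos\frac{\beta-\gamma}{2}$; multiplying and using the product-to-sum formula yields $\sin\alpha\le\sin\beta+\sin\gamma$ for $\Phi$ and $\Psi$ simultaneously. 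You instead treat the two angles by separate methods: your $\Psi$-argument uses only the triangle inequality (\ref{eeq23}) together with a case split on whether $\Psi(u,w)+\Psi(w,v)$ exceeds $\pi/2$, the bound $\sin t+\cos t\ge 1$ on $[0,\pi/2]$ closing the second case; your $\Phi$-argument bypasses the angle triangle inequalities entirely, passing to the realification with inner product $\re\inner{\cdot,\cdot}$, restricting to the (at most $3$-dimensional) real span, and deriving an exact identity for $(\sin\alpha+\sin\beta)^2-\sin^2\gamma$ from the spherical law of cosines $\cos\gamma=\cos\alpha\cos\beta+\sin\alpha\sin\beta\cos C$. I checked that identity: expanding and using $1-\cos^2\alpha-\cos^2\beta+\cos^2\alpha\cos^2\beta=\sin^2\alpha\sin^2\beta$ gives exactly your bracketed expression, which is nonnegative since $|\cos\alpha\cos\beta\cos C|\le 1$. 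What the paper's route buys is brevity and uniformity, at the cost of leaning on Proposition \ref{prop42}, which the paper only asserts ``by the same idea'' as Theorem \ref{thm41}; what your route buys is self-containedness --- in particular your $\Phi$-argument is independent of Krein's inequality and even quantifies the defect --- at the cost of running two separate arguments and of the routine but worth-stating checks that the real span of $u,v,w$ has real dimension at most $3$ and that the degenerate cases $\sin\alpha=0$ or $\sin\beta=0$ in the spherical law of cosines are harmless.
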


\begin{proof}
For brevity, we denote $\alpha,\beta,\gamma$ by the angles 
$\Psi (u,v),\Psi (u,w)$, $\Psi (w,v)$ or $\Phi (u,v),\Phi (u,w)$, $\Phi (w,v)$, respectively. 
By Proposition \ref{prop42}, we have 
\begin{gather*} 
0\le \frac{\alpha}{2} \le \frac{\beta +\gamma}{2} \le \pi -\frac{\alpha}{2},\quad 
0\le \frac{|\beta-\gamma|}{2} \le \frac{\alpha}{2} \le \frac{\pi}{2}.
\end{gather*}
Then 
\begin{align*}
0\le \sin \frac{\alpha}{2} \le \sin \frac{\beta +\gamma}{2},\quad 
0\le \cos \frac{\alpha}{2} \le \cos \frac{\beta-\gamma}{2}. 
\end{align*}
The required inequality can be written 
\[ \sin \alpha \le 
2\sin \frac{\beta +\gamma}{2} \cos \frac{\beta-\gamma}{2} 
=\sin \beta + \sin \gamma.\]
This completes the proof. 
\end{proof}

\section*{Acknowledgments}
All authors would like to express sincere thanks to 
Prof. Fuzhen Zhang 
for his kind help and valuable discussion \cite{BP10} before its publication, 
which considerably improves the presentation of our manuscript.
Finally, the second author is grateful for the valuable comments 
and  suggestions 
from Prof. Meiyue Shao. 
This work was supported by NSFC (Grant No. 11871479 and 12071484),  
Hunan Provincial Natural Science Foundation (Grant No. 2020JJ4675 and 2018JJ2479) 
and Mathematics and Interdisciplinary Sciences Project of CSU.

\end{document}